\theoremstyle{thmstyleone}%
\newtheorem{theorem}{Theorem}[section]
\newtheorem{theorema}{Theorem}
\newtheorem{lemma}[theorem]{Lemma}%
\theoremstyle{thmstyletwo}%
\newtheorem{remark}{Remark}%
\theoremstyle{thmstylethree}%
\newtheorem{notation}{Notation}
\newcommand\elnode[1]{\makebox[0.5mm][c]{$\underset{#1}{\bullet}$}}
\newcommand\elnodeup[1]{\makebox[0.5mm][c]{$\overset{#1}{\bullet}$}}
\DeclareMathOperator{\Alt}{Alt}
\DeclareMathOperator{\Aut}{Aut}
\DeclareMathOperator{\ele}{l}
\DeclareMathOperator{\GL}{GL}
\DeclareMathOperator{\maxi}{m}
\DeclareMathOperator{\Norm}{N}
\DeclareMathOperator{\OLandau}{O}
\DeclareMathOperator{\Orth}{O}
\DeclareMathOperator{\Out}{Out}
\DeclareMathOperator{\PSL}{PSL}
\DeclareMathOperator{\PSp}{PSp}
\DeclareMathOperator{\PSU}{PSU}
\DeclareMathOperator{\SL}{SL}
\DeclareMathOperator{\Soc}{Soc}
\DeclareMathOperator{\SU}{SU}
\DeclareMathOperator{\Sym}{Sym}
\newcommand\F{\mathbb{F}}
\begin{document}

\title[Maximal subgroups of small index of almost simple groups]{Maximal subgroups of small index of finite almost simple groups}


\author[1]{\fnm{Adolfo} \sur{Ballester-Bolinches}}\email{Adolfo.Ballester@uv.es}
\equalcont{These authors contributed equally to this work.}

\author*[2]{\fnm{Ram\'on} \sur{Esteban-Romero}}\email{Ramon.Esteban@uv.es}
\equalcont{These authors contributed equally to this work.}

\author[3]{\fnm{Paz} \sur{Jim\'enez-Seral}}\email{paz@unizar.es}
\equalcont{These authors contributed equally to this work.}

\affil[1]{\orgdiv{Departament de Matem\`atiques}, \orgname{Universitat de Val\`encia}, \orgaddress{\street{Dr.\ Moliner, 50}, \city{Burjassot}, \postcode{46100}, \state{Val\`encia}, \country{Spain}}}

\affil*[2]{\orgdiv{Departament de Matem\`atiques}, \orgname{Universitat de Val\`encia}, \orgaddress{\street{Dr.\ Moliner, 50}, \city{Burjassot}, \postcode{46100}, \state{Val\`encia}, \country{Spain}}}

\affil[3]{\orgdiv{Departamento de Matem\'aticas}, \orgname{Universidad de Zaragoza}, \orgaddress{\street{Pedro Cerbuna, 12}, \city{Zaragoza}, \postcode{50009}, \state{Zaragoza}, \country{Spain}}}


\abstract{We prove in this paper that every almost simple
group $R$ with socle isomorphic to a simple group $S$ possesses a
conjugacy class of core-free maximal subgroups whose index coincides with the
smallest index $\ele(S)$ of a maximal subgroup of $S$ or a conjugacy class
of core-free maximal subgroups with a fixed index $v_S\le \ele(S)^2$,
depending only on~$S$. We also prove that the number of subgroups of the outer automorphism group of $S$ is bounded by $\log^3{\ele(S)}$ and $\ele(S)^2< \lvert S\rvert$.}

\keywords{finite group, maximal subgroup, simple group, almost simple group}

\pacs[MSC Classification]{20E28, 20E32, 20B15}

\maketitle

\section{Introduction}
All groups considered in this paper will be finite.

Given a group $G$, %
one
can ask how many elements one should choose uniformly and
at random to generate $G$ with a certain given probability. The fact that an ordered $r$-tuple $(g_1,\dots, g_r)$ generates $G$ is equivalent to the fact that $\{g_1,\dots, g_r\}$ is not contained in any maximal subgroup $M$ of~$G$. The probability that $\{g_1,\dots, g_r\}$ is contained in a maximal subgroup $M$ of~$G$ is $1/\lvert G:M\rvert^r$. Consequently, it is of relevant interest to find good bounds for the number $ \maxi_n(G)$ of maximal subgroups of a group $G$ of  a given index $n$. 

Note that if $M$ is a maximal subgroup of $G$, then $G/M_G$, where $M_G$ denotes the core of $M$ in~$G$, is a primitive group. Consequently, the proof of many results in this field relies on the subgroup structure of such groups. 

According to the theorem of Baer \cite{Baer57} (see also \cite[Theorem~1.1.7]{BallesterEzquerro06}), there are three types of primitive groups, according to whether they have a unique abelian minimal normal subgroup (type~1), a unique non-abelian minimal normal subgroup (type~2), or two non-abelian minimal normal subgroups (type~3). The theorem of O'Nan and Scott (see
 \cite[Theorem~1.1.52]{BallesterEzquerro06}) describes the different possibilities for a primitive pair $(G, U)$ composed of a primitive group $G$ of type~2 and a core-free maximal subgroup $U$ of $G$. In all cases,  the corresponding primitive pair is related to a primitive pair corresponding to an almost simple group with socle $S$, where the minimal normal subgroup of~$G$ is a direct product of copies of~$S$. This makes crucial the study of the indices of core-free maximal subgroups of almost simple groups in the study of core-free maximal subgroups of primitive groups of type~2.

\begin{notation}
  We denote by $\ele(X)$ the least degree of a faithful transitive
permutation representation of a group~$X$, that is, the smallest index
of a core-free subgroup of~$G$. 
\end{notation}

The aim of this paper is to prove that every almost simple
group $R$ with socle isomorphic to a simple group $S$ possesses a
conjugacy class of core-free maximal subgroups whose index coincides with the
smallest index $\ele(S)$ of a maximal subgroup of $S$ or a conjugacy class
of core-free maximal subgroups with a fixed index $v_S\le \ele(S)^2$,
depending only on~$S$. We also prove that the number of subgroups of the outer automorphism group of $S$ is bounded by $\log^3{\ele(S)}$ and that $\ele(S)^2< \lvert S\rvert$.

These results will be applied in~\cite{BallesterEstebanJimenez-maximal-bounds-appl} to obtain lower bounds for the number of elements needed to generate a group with a certain probability and to obtain good lower bounds for the number of maximal subgroups of a given index of a group. They are also useful to estimate the number of
possible socles of primitive groups of type~2 with a core-free maximal
subgroup of a given index.

Our first main result includes relevant information over the smallest
index ${\ele(S)}$ of a maximal subgroup of a non-abelian simple group $S$
and shows the existence of relevant subgroups of small index in an
almost simple group with socle $S$. Moreover, we see that the order of the outer automorphism group of $S$ is bounded by
$3\log \lvert S\rvert$. Here we reserve the symbol $\log$ to denote the logarithm to the base~$2$. This last bound clarifies and improves the one used in the proof of
\cite[Lemma~2.3]{BorovikPyberShalev96}, $\lvert \Out S\rvert \le
\OLandau(\log^2n)$, and, as we will show in Remark~\ref{remark-PSL-log}, this
bound is best possible. The bound $\lvert {\Out S}\rvert \le 3\log
\ele(S)$ also appears in
\cite[Lemma~7.7]{GuralnickMarotiPyber17}, we
present here a proof for completeness.

We say that a maximal subgroup of a
simple group $S$ is \emph{ordinary} if its conjugacy class in $S$ coincides
with its conjugacy class in $\Aut(S)$. 
Most simple groups $S$ possess a conjugacy class of ordinary maximal subgroups of the smallest possible index $\ele(S)$ and, by Lemma~\ref{lemma-conj-char-class} below, every almost simple group with socle $S$ possesses a  maximal subgroup of index $\ele(S)$. However, some simple groups do not have ordinary maximal subgroups of the smallest possible index. These groups constitute the classes $\mathfrak{X}$ and $\mathfrak{Y}$ that we define below.

\begin{notation}
Let $\mathfrak{X}$ be the class of simple groups composed of the
following groups: 
\begin{enumerate}
\item the linear groups $\PSL_3(q)$, where $q=p^f> 3$ is a power of
  a prime $p$ with $f$
odd;
\item the linear groups $\PSL_n(q)$, with $q$ a prime power and $n=5$ or $n\ge 7$;
\item the symplectic groups $\PSp_4(2^f)$, $f\ge 2$.
\end{enumerate}
\end{notation}

\begin{notation}
Let $\mathfrak{Y}$ be the class of simple groups composed of the
following groups: 
\begin{enumerate}
\item the Mathieu
group $\mathrm{M}_{12}$;
\item the O'Nan group $\mathrm{O'N}$;
\item the Tits
group ${}^2\!F_4(2)'$;
\item the linear groups
$\PSL_2(7)\cong
\PSL_3(2)$, $\PSL_2(9)\cong \Alt(6)$, $\PSL_2(11)$, $\PSL_3(3)$;
\item the linear groups
$\PSL_3(q_0^2)$, with $q_0$ a prime power;
\item the linear groups $\PSL_4(q)$, with $q$ a prime
power, $q>2$;
\item the linear groups $\PSL_6(q)$, with $q$ a prime power;
\item the unitary group $\PSU_3(5)$;
\item the orthogonal groups $\Orth_8^+(q)$, with $q$ a prime power;
\item the orthogonal groups $\Orth_n^+(3)$, with $n\ge 10$;
\item the exceptional groups of Lie type $G_2(3^f)$, with
$f\ge 1$;
\item the exceptional groups of Lie type $F_4(2^f)$, with $f\ge 1$;
\item the exceptional groups of Lie type $E_6(q)$, with $q$ a prime power.
\end{enumerate}
\end{notation}

In the simple groups $S$ of the class~$\mathfrak{Y}$, there are no ordinary maximal subgroups of index $\ele(S)$, but we can find a number $v_S\le \ele(S)^2$ that depends only on $S$ such that $S$ has a conjugacy class of ordinary maximal subgroups of index $v_S$. Again by Lemma~\ref{lemma-conj-char-class}, every almost simple group with socle $S$ possesses a conjugacy class of maximal subgroup of this index $v_S$. In other words, $v_S$ appears as a common index of a core-free maximal subgroup for all almost simple groups with socle $S$. The class~$\mathfrak{X}$ is composed of the rest of the simple groups, that is, all simple groups that do not have a conjugacy class of ordinary maximal subgroups with index bounded by $\ele(S)^2$. However, we will prove that in the groups of the class~$\mathfrak{X}$, we can find a number $v_S\le \ele(S)^2$ such that every almost simple group $R$ with socle $S$ possesses a maximal subgroup of order $\ele(S)$ or a maximal subgroup of index~$v_S$. We present this in detail in  Theorem~\ref{th-simple}.

\begin{theorema}\label{th-simple}
  Let $S$ be a simple group.
  \begin{enumerate}
  \item If $S$ does not belong to $\mathfrak{X}\cup\mathfrak{Y}$, then $S$ has a
    conjugacy class of ordinary maximal subgroups. In particular, if $R$ is an almost simple group with $S\leq R \leq \Aut(S)$, then $R$ has a conjugacy class of core-free maximal subgroups of index $\ele(S)$.
  \item If $S$ belongs to $\mathfrak{Y}$, then $S$ has at least two
    conjugacy classes of maximal subgroups of the smallest index
    ${\ele(S)}$ and there exists a number $v_S\le {\ele(S)}^2$, depending only
    on $S$, such that if $R$ is an almost simple group with $S\leq R\leq \Aut(S)$, then $R$ has a conjugacy class of core-free maximal
    subgroups with index $v_S$. 
  \item If $S$ belongs to $\mathfrak{X}$, then $S$ has at least two
    conjugacy classes of maximal subgroups of the smallest index ${\ele(S)}$
    and there exists a number $v_S\le {\ele(S)}^2$, depending only on~$S$,
    such that if $R$ is an almost simple group with $S\leq R\leq \Aut(S)$, then $R$
    has at least two conjugacy classes of core-free maximal subgroups with index
    ${\ele(S)}$ or one conjugacy class of core-free maximal subgroups with
    index $v_S$.
  \item In all cases, ${\ele(S)}^2 < \lvert S\rvert$\label{en-lemma-simple-4}  and
     $\lvert{\Out S}\rvert \le 3\log {\ele(S)}$.
   \item  If, in addition,\label{en-lemma-simple-1log}
     \begin{enumerate}
     \item $S\not\cong \Alt(6)$;
     \item $S$
       is not of the
       form $\PSL_n(q)$ with $q=p^f$ and
       \begin{enumerate}
       \item $n\ge 3$, $p\in\{2,3,5,7\}$, and $\gcd(n, q-1)>1$,
         or
       \item  $n=2$ and $q=3^f$;
       \end{enumerate}
     \item 
       $S$ is not of the form $\PSU_n(q)$ with
       $q=p^f$ and
       \begin{enumerate}
       \item $n=3$ and $p=3$, or
       \item  $n=3$ and $q=5$, or  
       \item $n\ge 4$, $p=2$, $f>1$ and $\gcd(n, q+1)>1$, or
       \item $p=3$, $n=5$, and
       \end{enumerate}
       
     \item $S\not \cong {\Orth_8^+(q)}$ with $q=p^f$ and
       $p\in\{3,5,7,11,13\}$, 
     \end{enumerate}
   \end{enumerate}
   then $\lvert \Out S\rvert \le \log
     {\ele(S)}$.
\end{theorema}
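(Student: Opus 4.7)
The plan is to prove Theorem~\ref{th-simple} by a case analysis over the classification of finite simple groups, splitting $S$ into the families of alternating groups, sporadic groups, classical groups, and exceptional groups of Lie type. For each family I would combine the explicit value of $\ele(S)$ with the classification of maximal subgroups of small index and with the action of $\Out(S)$ on their $S$-conjugacy classes, drawn from the ATLAS, the monograph of Bray, Holt and Roney-Dougal for the low-dimensional classical groups, the Kleidman--Liebeck tables in the general classical case, and the Liebeck--Seitz papers on exceptional groups; the numerical value of $\ele(S)$ itself is tabulated in the work of Cooperstein, Mazurov, Patton and Vasilyev. The bridge from $S$ to an almost simple overgroup $R$ is Lemma~\ref{lemma-conj-char-class}, which lifts every $\Aut(S)$-invariant (i.e.\ \emph{ordinary}) conjugacy class of maximal subgroups of $S$ to a conjugacy class of core-free maximal subgroups of the same index in each $R$ with $S \le R \le \Aut(S)$.

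For each simple group $S$ I would enumerate the $S$-classes of maximal subgroups of index $\ele(S)$ and decide which are ordinary. If some class is ordinary, then $S \notin \mathfrak{X}\cup\mathfrak{Y}$ and Part~(1) follows at once. Otherwise $S$ lies in $\mathfrak{X}\cup\mathfrak{Y}$, and one looks for a second, ordinary class of maximal subgroups of some index $v_S \le \ele(S)^2$; natural candidates are stabilizers of a $2$-space, of a suitable non-degenerate subspace, of an incident point--hyperplane pair, or of another Aut-invariant geometric configuration, and in each listed family one checks the index against $\ele(S)^2$ using the explicit formula for $\ele(S)$. The distinction between $\mathfrak{X}$ and $\mathfrak{Y}$ is precisely whether such a second ordinary class exists: if it does, $S \in \mathfrak{Y}$ and Part~(2) is immediate from Lemma~\ref{lemma-conj-char-class}; if it does not, $S \in \mathfrak{X}$, and for each $R$ one must trace the action of the outer automorphisms in $R/S$ on the non-ordinary $\ele(S)$-indexed classes and on a chosen non-ordinary $v_S$-indexed class, obtaining the disjunction of Part~(3). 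This bookkeeping is the main obstacle and is typified by $\PSL_n(q)$ with $n \ge 3$, where the stabilizers of a $1$-space and of a hyperplane form two $S$-classes interchanged by the graph automorphism and so persist as core-free maximal subgroups of $R$ exactly when the graph automorphism is absent from $R$; parallel phenomena (triality for $\Orth_8^+(q)$, field--graph fusion for $\PSp_4(2^f)$, coincidences for $\mathrm{M}_{12}$ and the Tits group, etc.) account for each of the remaining families in $\mathfrak{X} \cup \mathfrak{Y}$.

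For Part~(4) the inequality $\ele(S)^2 < |S|$ is verified family by family: for $\Alt(n)$ with $n \ge 5$ it reduces to $n^2 < n!/2$; for a Lie-type group of rank $r$ over $\mathbb{F}_q$ one has $\log_q \ele(S) \le r + \OLandau(1)$ while $\log_q |S|$ grows like $\dim S \ge 2r$; the sporadic groups are immediate from the ATLAS. The bound $|\Out S| \le 3\log \ele(S)$ is checked against the standard factorisation $|\Out S| = d\cdot f\cdot g$ (diagonal, field, graph) for Lie-type groups: combining $\log \ele(S) \ge (r-1)f\log p$ with the elementary estimate $d \le q^{r-1}$ in the linear and unitary cases yields the required inequality, the constant $3$ being attained precisely for the small $\PSL_n(q)$ identified in Remark~\ref{remark-PSL-log}. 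For Part~(5), the listed exclusions remove exactly those families in which the diagonal factor $d$ or a small field degree spoils the sharper bound $|\Out S| \le \log \ele(S)$; in every remaining case $d\cdot f\cdot g \le (r-1)\cdot f \log p \le \log \ele(S)$.
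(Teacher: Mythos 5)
Your strategy is the same as the paper's: a case-by-case analysis over the classification, with Lemma~\ref{lemma-conj-char-class} as the bridge from an $\Aut(S)$-invariant class in $S$ to a core-free maximal subgroup of the same index in every $R$; the dichotomy between $\mathfrak{X}$ and $\mathfrak{Y}$ being exactly the existence or not of a second ordinary class of index at most $\ele(S)^2$; and the $\mathfrak{X}$-case handled by novelties (the double parabolic $P_{1,n-1}$ for $\PSL_n(q)$, the subgroup $[q^4]:(C_{q-1})^2$ for $\PSp_4(2^f)$) that become maximal in $R$ precisely when the graph automorphism is present. Since the substance of the theorem is the family-by-family verification against the Atlas, Bray--Holt--Roney-Dougal and Kleidman--Liebeck, and you defer all of that, what you have is a correct skeleton rather than a proof.

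Two of the quantitative claims you do commit to in Part~(\ref{en-lemma-simple-4}) are, however, wrong as stated. First, $\log_q \ele(S)\le r+\OLandau(1)$ fails outside type $A$: for $\PSp_{2t}(q)$ one has $\ele(S)=(q^{2t}-1)/(q-1)\approx q^{2t-1}$ and for $\Orth_{2t}^{\pm}(q)$ one has $\ele(S)\approx q^{2t-2}$, both roughly $q^{2r}$ rather than $q^{r}$ (and $\approx q^{4r}$ in the unitary case). The inequality $\ele(S)^2<\lvert S\rvert$ still holds, but the paper obtains it by checking in each family that the smallest-index maximal subgroup has order exceeding its index, not from the bound you state. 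Second, the estimate $d\le q^{r-1}$ cannot ``yield'' $\lvert\Out S\rvert=dfg\le 3\log\ele(S)$: with $d\le q^{r-1}=p^{f(r-1)}$ the product $dfg$ is exponentially larger than $f\log p$. What is actually needed, and what the paper uses for $\PSL_n(q)$ with $n\ge 3$, is $d=\gcd(n,q-1)\le n\le (3/2)(n-1)$, so that $n\cdot f\cdot 2\le 3(n-1)f\le 3\log q^{n-1}\le 3\log\ele(S)$; the analogous arithmetic with $\gcd(n,q+1)\le n$ handles the unitary case. With those two repairs, and the case-by-case bookkeeping actually carried out, your outline becomes the paper's proof.
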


\begin{remark}\label{remark-ONan}
  According to \cite{Atlas85}, the automorphism group of the O'Nan simple group
  $S\cong{\operatorname{O'N}}$ has all core-free maximal subgroups of
  index greater than its order, so
  Theorem~\ref{th-simple}~(\ref{en-lemma-simple-4}) cannot be
  extended to the core-free maximal subgroups of almost simple groups.
\end{remark}

\begin{theorema}\label{th-s-log3}
  The number of subgroups of the outer automorphism group of a non-abelian
  simple group $S$ is bounded by $\log^3{\ele(S)}$.
\end{theorema}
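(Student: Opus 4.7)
The plan is to combine item~(\ref{en-lemma-simple-4}) of Theorem~\ref{th-simple}, which gives $\lvert \Out S \rvert \le 3 \log \ele(S)$, with a structural bound on the number of subgroups of $\Out S$ that is polynomial of small degree in $\lvert \Out S \rvert$. Thanks to the classification of finite simple groups, $\Out S$ is described very explicitly in each case, and that explicit form is what drives the subgroup count.

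First I would dispatch the easy cases. For $S$ alternating with $S \not\cong \Alt(6)$ and for each sporadic simple group, $\lvert \Out S \rvert \le 2$, so $\Out S$ has at most $2$ subgroups. For $S \cong \Alt(6)$, $\Out S \cong C_2 \times C_2$ has exactly $5$ subgroups. In all of these cases the inequality $\log^3 \ele(S) \ge \log^3 5 > 5$ suffices, because $\ele(S) \ge 5$ for every non-abelian simple group.

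For $S$ of Lie type over $\GF(p^f)$, I would use the standard decomposition $\Out S = D \rtimes (\langle \phi \rangle \rtimes \Gamma)$, where $D$ is the cyclic diagonal part, $\phi$ a field automorphism of order $f$, and $\Gamma$ the graph automorphism group (cyclic of order at most $2$ outside $D_4(q)$, with $\Gamma \cong \Sym(3)$ in the triality case). A subgroup $H \le \Out S$ is determined by the chain $H \cap D \le H \cap (D\langle \phi \rangle) \le H$ together with its extension data, yielding a subgroup count that is polynomial of small degree in $\lvert \Out S \rvert$. Substituting the estimate $\lvert \Out S \rvert \le 3 \log \ele(S)$ then produces a bound which is subcubic in $\log \ele(S)$, and therefore below $\log^3 \ele(S)$ for all but finitely many $S$.

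The main obstacle is expected in the Lie-type families where $\lvert \Out S \rvert$ is largest relative to $\log \ele(S)$: in particular $\PSL_n(q)$ and $\PSU_n(q)$ when both $\gcd(n, q \mp 1)$ and $f$ are large, together with $\Orth_{2n}^{\pm}(q)$ and the triality case $\Orth_8^+(q)$. There the polynomial subgroup bound must be combined tightly with the bound on $\lvert \Out S \rvert$, and the finite list of small-$\ele(S)$ exceptions has to be verified directly, for instance using the \emph{Atlas of Finite Groups} or a computer algebra system.
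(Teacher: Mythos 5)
Your overall skeleton matches the paper's: dispose of the alternating and sporadic cases, use the explicit structure of $\Out S$ for groups of Lie type to see that every subgroup is $3$-generated, and combine this with the bound on $\lvert \Out S\rvert$ from Theorem~\ref{th-simple}. But the step that carries all the weight is asserted rather than proved. The only bound that $3$-generation gives for free is cubic: at most $\lvert \Out S\rvert^3$ generating triples, hence at most $\lvert \Out S\rvert^3$ subgroups. Substituting $\lvert \Out S\rvert\le 3\log\ele(S)$ yields $27\log^3\ele(S)$, which overshoots the target by a factor of $27$; this is precisely why the paper first isolates the cases where the stronger inequality $\lvert \Out S\rvert\le\log\ele(S)$ holds (Theorem~\ref{th-simple}~(\ref{en-lemma-simple-1log})), for which the crude cubic count already gives $\log^3\ele(S)$. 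Your claim that the chain $H\cap D\le H\cap D\langle\phi\rangle\le H$ yields a count that is \emph{subcubic} in $\lvert\Out S\rvert$ is exactly the missing content: to make it rigorous you must bound the ``extension data'', and the natural bound (complements, i.e.\ cocycles) reintroduces factors of $\lvert D\rvert$ that push the count back toward cubic unless you exploit the specific arithmetic of each family. That is what the paper actually does: it restricts the first generator to the diagonal subgroup, the second to the diagonal-plus-field subgroup, replaces $\lvert D\rvert$ by the number of divisors of $\gcd(m,q-1)$ when that number is odd, and for $\PSL_3(q)$ in characteristic $2$ works from an explicit presentation of $\Out S$.

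Your closing remark that only ``the finite list of small-$\ele(S)$ exceptions has to be verified directly'' also mischaracterizes the exceptional set: the groups for which $\lvert\Out S\rvert>\log\ele(S)$ form infinite families ($\PSL_n(q)$ with $p$ small and $\gcd(n,q-1)>1$, $\PSU_3(3^f)$, $\Orth_8^+(p^f)$ with $p\in\{3,5,7,11,13\}$, etc.), so each family needs a uniform refined count, not a finite check; and unless the implied constants in your ``polynomial of small degree'' are computed explicitly, the threshold below which a direct verification would be required could be far beyond the reach of the Atlas or of machine computation. Finally, your decomposition assumes the diagonal part $D$ is cyclic, which fails for $\Orth_{2m}^+(q)$ with $q$ odd and $m$ even (there $D\cong C_2\times C_2$, and $\Out S\cong\Sym(4)\times C_f$ when $m=4$); the paper treats this case separately using Kleidman's description. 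Without these refinements the argument does not close.
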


Unless otherwise stated, we will follow the notation of the books
\cite{DoerkHawkes92} and \cite{BallesterEzquerro06}. Detailed
information about primitive groups and  chief factors of a group can be found in \cite[Chapter~1]{BallesterEzquerro06}.
\section{Proofs}\label{sect-max-small-index}

\begin{figure}[tb]
  \SelectTips{cm}{}

\centering

  \[\xymatrix@1@R-6mm{A_l\colon&{\elnode{1}}\ar@{-}[r]&{\elnode{2}}\ar@{-}[r]&{\elnode{3}}\ar@{.}[r]&{\elnode{l-3}}\ar@{-}[r]&{\elnode{l-2}}\ar@{-}[r]&{\elnode{l-1}}\ar@{-}[r]&{\elnode{l}}\\
B_l\colon&{\elnode{1}}\ar@{-}[r]&{\elnode{2}}\ar@{-}[r]&{\elnode{3}}\ar@{.}[r]&{\elnode{l-3}}\ar@{-}[r]&{\elnode{l-2}}\ar@{-}[r]&{\elnode{l-1}}\ar@{=>}[r]&{\elnode{l}}\\
C_l\colon&{\elnode{1}}\ar@{-}[r]&{\elnode{2}}\ar@{-}[r]&{\elnode{3}}\ar@{.}[r]&{\elnode{l-3}}\ar@{-}[r]&{\elnode{l-2}}\ar@{-}[r]&{\elnode{l-1}}\ar@{<=}[r]&{\elnode{l}}\\
&&&&&&{\elnode{\quad l-1}}\\
    D_l\colon&{\elnode{1}}\ar@{-}[r]&
    {\elnode{2}}\ar@{-}[r]&{\elnode{3}}\ar@{.}[r]&{\elnode{l-3}}\ar@{-}[r]&{\elnode{l-2\quad
      }}\ar@{-}[ur]\ar@{-}[dr]\\
    &&&&&&{\elnode{l}}\\
E_6\colon&{\elnode{1}}\ar@{-}[r]&{\elnode{3}}\ar@{-}[r]&{\elnodeup{4}}\ar@{-}[d]\ar@{-}[r]&{\elnode{5}}\ar@{-}[r]&{\elnode{6}}\\
  &&&{\elnode{2}}\\
  E_7\colon&{\elnode{1}}\ar@{-}[r]&{\elnode{3}}\ar@{-}[r]&{\elnodeup{4}}\ar@{-}[d]\ar@{-}[r]&{\elnode{5}}\ar@{-}[r]&{\elnode{6}}\ar@{-}[r]&{\elnode{7}}\\
  &&&{\elnode{2}}\\
  E_8\colon&{\elnode{1}}\ar@{-}[r]&{\elnode{3}}\ar@{-}[r]&{\elnodeup{4}}\ar@{-}[d]\ar@{-}[r]&{\elnode{5}}\ar@{-}[r]&{\elnode{6}}\ar@{-}[r]&{\elnode{7}}\ar@{-}[r]&{\elnode{8}}\\
  &&&{\elnode{2}}\\F_4\colon&{\elnode{1}}\ar@{-}[r]&{\elnode{2}}\ar@{=>}[r]&{\elnode{3}}\ar@{-}[r]&{\elnode{4}}\\
  G_2\colon&{\elnode{1}}\ar@3{<-}[r]&{\elnode{2}}}\]
\caption{Dynkin diagrams for the simple groups of Lie type}
  \label{fig-Dynkin}
\end{figure}
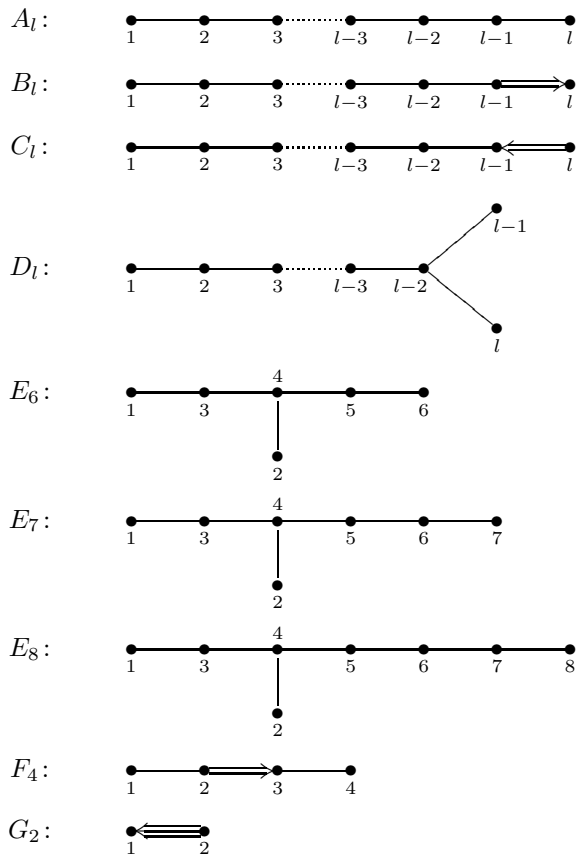

Our results will depend heavily on the classification of simple
groups. For the simple groups of Lie type, we number the nodes of the
corresponding
Dynkin diagrams as in Figure~\ref{fig-Dynkin} and denote accordingly
the associated parabolic subgroups. 
The values of ${\ele(S)}$ for the
simple groups of Lie type have been
computed in the series of papers of Mazurov \cite{Mazurov93}, Vasil'ev
and Mazurov \cite{VasilevMazurov94}, and Vasilyev
\cite{Vasilyev96,Vasilyev97,Vasilyev98}. 

\begin{lemma}\label{lemma-conj-char-class}
  Suppose that $S$ is a non-abelian simple group and let 
  $A=\Aut(S)$. We identify $S$ with the subgroup of $A$ composed by
  all inner automorphisms induced by~$S$. If $M$ is a maximal subgroup of $S$
  such that the conjugacy class of $M$ in $S$ is invariant under the
  action of $A$, then $W=\Norm_A(M)$ is a maximal subgroup of
  $A$, $W\cap S=M$, and $\lvert S:M\rvert =\lvert A:W\rvert$.
\end{lemma}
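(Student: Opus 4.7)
The plan is to exploit the Frattini-type consequence of the hypothesis that the $S$-conjugacy class of $M$ is $A$-invariant. Since $A$ acts on the set $\Omega$ of $S$-conjugates of $M$ and the hypothesis says this action has a single orbit that already agrees with the $S$-orbit, every $a\in A$ sends $M$ to some $M^s$ with $s\in S$, so $as^{-1}\in W=\Norm_A(M)$. Therefore $A=SW$, and consequently $\lvert A:W\rvert=\lvert S:S\cap W\rvert$. This isomorphism theorem computation will give the index equality once $S\cap W=M$ is established.

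Next I would identify $S\cap W$ with $\Norm_S(M)$. Because $S$ is non-abelian simple and $M$ is maximal, $M$ cannot be normal in $S$; hence the subgroup $\Norm_S(M)$, which lies between $M$ and $S$, equals $M$ by maximality. So $W\cap S=\Norm_S(M)=M$. Combined with $A=SW$ this yields $\lvert S:M\rvert=\lvert SW:S\rvert\cdot\ldots$; more cleanly, $\lvert A:W\rvert=\lvert SW:W\rvert=\lvert S:S\cap W\rvert=\lvert S:M\rvert$.

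It remains to prove maximality of $W$ in $A$. I would take any subgroup $U$ with $W\le U\le A$ and study $U\cap S$, which contains $W\cap S=M$. By maximality of $M$ in $S$, either $U\cap S=M$ or $U\cap S=S$. In the second case $S\le U$ and $U\supseteq SW=A$, so $U=A$. In the first case, the inclusion $W\le U$ together with $A=SW$ gives $US=A$, whence
\[
  \lvert U:M\rvert=\lvert U:U\cap S\rvert=\lvert US:S\rvert=\lvert A:S\rvert=\lvert WS:S\rvert=\lvert W:W\cap S\rvert=\lvert W:M\rvert,
\]
which forces $U=W$. This completes the argument.

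The proof is essentially a packaged Frattini argument plus the self-normalizing property of a maximal subgroup in a simple group; the only point that needs a bit of care is keeping the index bookkeeping straight in the maximality step, so I would expect that to be the most error-prone part rather than a genuine obstacle.
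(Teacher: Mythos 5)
Your proof is correct and takes essentially the same route as the paper: the $A$-invariance of the $S$-class of $M$ is turned into the Frattini-type decomposition $A=SW$ (the paper expresses the same fact by equating conjugacy-class lengths and hence indices of normalisers), $W\cap S=\Norm_S(M)=M$ follows from maximality and simplicity, and maximality of $W$ comes from the dichotomy $U\cap S\in\{M,S\}$. The only cosmetic difference is in the case $U\cap S=M$, where the paper observes that $M=U\cap S$ is normal in $U$, so $U\le \Norm_A(M)=W$, in place of your index bookkeeping; both are valid.
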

\begin{proof}
  Note that the length of the conjugacy class of $M$ in $S$ is equal
  to the length of the conjugacy class of $M$ in $A$. Since this
  length coincides with the index of the corresponding normalisers, we
  have that $\lvert S:\Norm_S(M)\rvert=\lvert A:
  \Norm_A(M)\rvert$. In particular, if $S\le T\le A$, then
  the conjugacy class of $M$ in $T$ coincides with the conjugacy class
  of $M$ in $S$. Since the length of the conjugacy
  class coincides with the index of the normaliser and $M=\Norm_S(M)$,
  we have that
  \begin{equation}
    \lvert S:M\rvert=\lvert T:{\Norm_T(M)}\rvert=\lvert
    A:{\Norm_A(M)}\rvert
    \label{eq-intermediate}
  \end{equation}
  for every $T$ with $S\le T\le A$. Let
  $W=\Norm_A(M)$. Then $W\cap S=\Norm_S(M)=M$ since $M$ is a maximal
  subgroup and $S$ is a non-abelian simple group. We prove now that
  $W$ is a maximal subgroup of $A$. Suppose that $W\le V\le A$. By
  taking intersections with $S$, we obtain that $M\le V\cap S\le
  S$. Since $M$ is maximal in $S$, $M=V\cap S$ or $V\cap S=S$. In the
  first case, since $M=V\cap S$ is a normal subgroup of $V$, we obtain
  that $V\le \Norm_A(M)=W$ and so $V=W$. In the second case, $S\le
  V$. As $\Norm_A(M)=\Norm_V(M)$,
  by~\eqref{eq-intermediate}, it turns out that $V=A$.
\end{proof}

By Lemma~\ref{lemma-conj-char-class}, if $X$ is an almost simple group
with $\Soc(X)\cong S$ and $M$ is an ordinary maximal subgroup of $S$,
then $\Norm_X(M)$ is a maximal subgroup of $X$ of index $\lvert
S:M\rvert$. We will use this fact without mentioning it explicitly.

\begin{proof}[Proof of Theorem~\ref{th-simple}]
  We will analyse the different possibilities for $S$ in the classification of
  finite simple groups. We note that the condition ${\ele(S)}^2<\lvert
  S\rvert$ is equivalent to affirming that there is a maximal subgroup
  of $S$
  with index less than its order. We warn the reader that the
  information about the maximal subgroups comes from several sources
  and, in order to make it easier to check the results, we have
  preferred to adhere to the notation of the corresponding source,
  even if in some cases there appear some inconsistencies in the notation.
  \newcommand{\smalltitle}[1]{\par\medskip\emph{#1}\nopagebreak\par\smallskip}
  \smalltitle{Sporadic simple groups}

  Suppose first that $S$ is a sporadic simple group. It is clear that
  if the outer automorphism group of $S$ is trivial, then
  $S\notin\mathfrak{X}\cup \mathfrak{Y}$ and  the result
  is trivially valid. In the other cases, the outer automorphism group
  has order~$2$. In the sporadic simple groups $\mathrm{M}_{22}$, $\mathrm{J}_2$,
  $\mathrm{Suz}$, $\mathrm{HS}$, $\mathrm{McL}$, $\mathrm{He}$,
  $\mathrm{Fi}_{22}$, $\mathrm{HN}$, and $\mathrm{J_3}$, according to
  the Atlas \cite{Atlas85}, the largest maximal subgroups are ordinary
  and so ${\ele^*(A)}={\ele(S)}$. The maximal subgroups of the group
  $\mathrm{Fi}_{24}'$ and its automorphism group $\mathrm{Fi}_{24}$
  have been studied in \cite{LintonFischer91}. The smallest index
  maximal subgroup $\mathrm{Fi}_{23}$ is ordinary. In the Mathieu
  group $\mathrm{M}_{12}$, there are two classes of the smallest index
  maximal subgroup, with index~$12$ and there is a class of ordinary
  maximal subgroups of index $144=12^2$ %
  (see
  \cite{Atlas85}). In the
  O'Nan group $S\cong \mathrm{O'N}$, according again to \cite{Atlas85}, there
  are two conjugacy classes of
  maximal subgroups of type $\mathrm{L}_3(7):2$, of the smallest index
  $122\,760$, fused under the outer automorphism, giving a conjugacy
  class of novel
  maximal subgroups of type $7^{1+2}_+:(3\times D_{16})$ and index
  $55\,978\,560\le {\ele(S)}^2$.

  For all the sporadic groups $S$, we also see that the inequality
  ${\ele(S)}^2<\lvert S\rvert$ holds for all groups whose maximal subgroups
  have been described in \cite{Atlas85}. The conclusion for the Baby
  Monster group $B$ follows, by \cite{Wilson99}, for the smallest index
  maximal subgroup $2.{}^2\!E_6(2):2$. The conclusion for the Monster
  group $M$ is also true, by \cite{Wilson10-Moonshine,NortonWilson13},
  with the maximal subgroup $2.B$.  Finally, it is clear that $\lvert \Out
  S \rvert \le 2\le \log {\ele(S)}$ for all sporadic simple groups $S$.

  According to \cite{Atlas85}, the Tits
  group ${}^2\!F_4(2)'$ has an outer automorphism group of order~$2$
  and two conjugacy classes of subgroups of type $\PSL_3(3):2$ of the
  smallest possible
  index, $1\,600$, fused under the graph automorphism, and an ordinary
  maximal subgroup of type $2.[2^8]:5:4$ of index $1\,755\le
  1\,600^2$. Furthermore, ${\ele(S)}^2<\lvert S\rvert$ and $\lvert \Out
  S\rvert \le 2\le \log {\ele(S)}$.

  \smalltitle{Alternating groups}

  Suppose now that $S\cong {\Alt(n)}$ with $n\ge 7$ or $n=5$. Then
  $\Alt({n-1})$ is an ordinary maximal subgroup of $S$ of the smallest
  possible order, $n$. If $S\cong \Alt(6)\cong \PSL_2(9)$, we see in
  \cite{Atlas85} that the smallest index of a maximal subgroup of $S$
  is $6$, while $S$ possesses an ordinary maximal subgroup of type
  $3^2:4$ and 
  index~$10<6^2$. Clearly, $\ele\bigl({\Alt(n)}\bigr)^2<\lvert {\Alt(n)}\rvert$ for all $n\ge
  5$ and $\lvert \Out \Alt(n)\rvert=2 \le \log {\ele(\Alt(n))}$ if $n\ne 6$, and
  $\lvert \Out \Alt(6)\rvert=4\le 3\log {\ele(\Alt(6))}$.

  \smalltitle{Linear groups}
  We start with the linear groups on dimension~$2$.
  Suppose first that $S\cong
  \PSL_2(q)$ with 
  $q\in \{5,7,8,9,11\}$. We have that $S=\PSL_2(5)\cong {\Alt(5)}$ and
  $S=\PSL_2(9)\cong {\Alt(6)}$ have been studied before. Moreover, according
  to \cite{Atlas85}, $S=\PSL_2(7)$
  has two conjugacy classes of maximal subgroups of index ${\ele(S)}=7$ and a
  class of ordinary maximal subgroups of index $8$, $S=\PSL_2(8)$
  has an ordinary maximal subgroup of index ${\ele(S)}=9$, and
  $S\cong\PSL_2(11)$ has two conjugacy classes of maximal subgroups of
  index~${\ele(S)}=11$ and another conjugacy class of ordinary maximal
  subgroups of index $12<{\ele(S)}^2$. 
  We can
  see in \cite{Atlas85}  the existence of maximal subgroups in $S$ and
  in all almost simple groups with the prescribed indices and 
  that
  $\lvert \Out \PSL(2,q)\rvert \le \log {\ele(S)}$ when $q\in\{5,7,8,11\}$ and that
  $\lvert \PSL(2,9)\rvert\le 3\log {\ele(S)}$.  Since
  the parabolic subgroups have index $q+1$ and order $q(q-1)/d$ with
  $d=\gcd(q-1,2)$, we see that ${\ele(S)}\le q+1\le q(q-1)/d$ and so
  ${\ele(S)}^2<\lvert S\rvert$. 

  Suppose that $S\cong \PSL_2(q)$, with $q=p^f\ge 13$. Then the
  parabolic (Borel)
  subgroups are  the smallest index maximal subgroups and are
  ordinary by \cite[Theorem~1]{Mazurov93}. Their index is ${\ele(S)}=q+1$
  and their order is $q(q-1)/d$. Then $q^2-3q-2=q(q-3)-2\ge 0$, which implies that $\ele(S)=q+1\le q(q-1)/2\le q(q-1)/d$ and so $\ele(S)^2\le \lvert S\rvert$.
  Furthermore,  
  $\lvert \Out S\rvert =\gcd(2,q-1)\cdot f\le 2f\le 2\log p^f\le 2\log
  {\ele(S)}$. If, in addition $p\ge 5$, then
  $2f\le \log p^f\le \log {\ele(S)}$, while if $p=2$, then $\lvert \Out
  S\rvert = 1\cdot f\cdot 1\le \log {\ele(S)}$.

  We consider now the linear groups on dimension greater than~$2$. The groups $\PSL_3(2)\cong \PSL_2(7)$ and $\PSL_4(2)\cong {\Alt(8)}$ have been
  considered before. 
  Let $R$ be an almost simple group with $S=\Soc(R)\cong \PSL_n(q)$, where $n\ge 3$, that is, $S\le R\le A$ with $A\cong \Aut(S)$, and
  suppose that $(n,q)\notin\{(3,2),(4,2)\}$. According to
  \cite[Theorem~1]{Mazurov93}, ${\ele(S)}=(q^n-1)/(q-1)$ is the index of a
  parabolic subgroup. 
  Then
  the outer automorphism group of $S$ is isomorphic to $[C_d][C_f]C_2=\langle \delta, \phi, \gamma\rangle$,
  with $q=p^f$, $p$ a prime, and $d=\gcd(n, q-1)$ (see, for example,
  \cite{Atlas85}).  
  If $R/S$ is contained in $\langle \delta, \phi\rangle$, the parabolic subgroups of type
  $P_1$, which are the stabilisers of a $1$-dimensional subspace, induce
  maximal subgroups of $R$ of index $(q^n-1)/(q-1)$ since they are stabilised by $\langle \delta, \phi\rangle$. If $R/S$ is not
  contained in $\langle \delta, \phi\rangle$, then the double parabolic subgroups
  $P_{1,n-1}$, stabilisers of pairs of subspaces $(W,U)$ with
  $W<U$ and $1=\dim W=n-\dim U$, induce maximal subgroups of $R$ of index
  $v_S=(q^n-1)(q^{n-1}-1)/(q-1)^2$ since they are stabilised by $\langle \delta,\phi,\gamma\rangle$. 
  Then $R$ has a maximal subgroup of index
  $\ele(S)=(q^n-1)/(q-1)$ or of index $v_S=(q^n-1)(q^{n-1}-1)/(q-1)^2\le {\ele(S)}^2$, according
  to whether or not $R/S$ is contained in $\langle \delta, \phi\rangle$, respectively. In
  particular, $\ele^*(R)\le {\ele(S)}^2$. It is clear that
  \[{\ele(S)}^2=\left(\frac{q^n-1}{q-1}\right)^2<\frac{q^{n(n-1)/2}(q^n-1)(q^{n-1}-1)\dotsm (q^2-1)}{\gcd(n, q-1)}=\lvert S\rvert.\]

  If
  $S\cong \PSL_4(2)$, then $\lvert\Out S\rvert=2<\log {\ele(S)}$.
  Furthermore, for $S=\PSL_n(q)$ with $n\ge 3$, $(n,q)\ne(4,2)$,  we have that
  ${\ele(S)}=(q^n-1)/(q-1)$ and so
  \begin{align*}\lvert
    \Out S\rvert &\le  n\cdot f\cdot 2 \le (3/2)(n-1)\cdot f\cdot 2\\
    &=3\log
    \bigl((2^f)^{n-1}\bigr)\le  3\log q^{n-1}< 3\log
    {\ele(S)}.
  \end{align*}
  If, in addition, $p\ge 11$, then $3\log (2^f)^{n-1}\le \log
  (p^f)^{n-1}\le \log {\ele(S)}$. Furthermore, if $p\ge 2$ and $\gcd(n, q-1)=1$, then $\lvert\Out S\rvert=2f\le q^2+1\le \ele(S)$.

  Now we analyse the cases for which $G\notin\mathfrak{X}$.
  Note  that, according to \cite{Atlas85}, the
  group $S\cong \PSL_3(3)$ has two conjugacy classes of maximal
  subgroups of index ${\ele(S)}=13$ and a conjugacy class of ordinary
  maximal subgroups of type $13:3$ and index $144<{\ele(S)}^2<\lvert S\rvert$  and  $S\cong\PSL_3(4)$ has two conjugacy classes of maximal
  subgroups of index ${\ele(S)}=21$ and a conjugacy class of ordinary
  maximal subgroups of type $3^2:Q_8$ and index $280<{\ele(S)}^2<\lvert S\rvert$.

    Suppose 
  that $S=\PSL_3(q)$ with $q=q_0^2$, where $q_0$ a prime power, $q_0>2$. We have that $\ele(S)=(q^3-1)(q-1)=(q_0^6-1)/(q_0^2-1)$. Then
  $\gcd(q_0+1,3)=1$ or $\gcd(q_0-1,3)=1$. By
  \cite[Table~8.3]{BrayHoltRoneyDougal13}, in the first case, $\SL_3(q)$ has an ordinary maximal subgroup isomorphic
  to $\SL_3(q_0)$ of index
  $q_0^3(q_0^3+1)(q_0^2+1)=q_0^8+q_0^6+q_0^5+q_0^3<(q_0^4+q_0^2+q_0)^2={\ele(S)}^2$, while,
  in the second case,
 $\SL_3(q)$ has
 an ordinary
 maximal subgroup isomorphic to $\SU_3(q_0)$, of index
  $q_0^3(q_0^3-1)(q_0^2+1)<\bigl((q_0^6-1)/(q_0^2-1)\bigr)^2={\ele(S)}^2$.
  
  For the group $S\cong \PSL_4(q)$, $q>2$, according to
  \cite[Table~8.8]{BrayHoltRoneyDougal13}, we have that $\SL_4(q)$ has two classes of maximal
  subgroups of the
  smallest index ${\ele(S)}=(q^4-1)/(q-1)$ and a conjugacy class of
  ordinary maximal subgroups of type $E_q^4:{\SL_2(q)}\times{\SL_2(q)}:(q-1)$ and
  index $v_S=(q^2+1)(q^3-1)/(q-1)$. Therefore
  $v_S/{\ele(S)}^2=(q-1)(q^3-1)/((q^2-1)(q^4-1))<1$ and so $v_S<{\ele(S)}^2$.
  
  For the group $S\cong \PSL_6(q)$, according to
  \cite[Table~8.24]{BrayHoltRoneyDougal13}, we have that $\SL_6(q)$ has two classes of maximal
  subgroups of the
  smallest index ${\ele(S)}=(q^6-1)/(q-1)$ and a conjugacy class of
  ordinary maximal subgroups of type $E_q^9:{\SL_3(q)}\times{\SL_3(q)}:(q-1)$ and
  index $v_S=(q^5-1)(q^4-1)(q^3+1)/((q-1)^2(q+1))$. Therefore
  $v_S/{\ele(S)}^2=(q^5-1)(q^2+1)(q^2-1)/((q^6-1)(q+1)^2(q^3-1))<1$ and so
  $v_S<{\ele(S)}^2$.


  \smalltitle{Symplectic groups}

  Now suppose that $S\cong \PSp_n(q)$ with $n\ge 4$ even. Then 
  \[\lvert
  S\rvert= \frac{1}{d}q^{(n/2)^2}\Biggl(\prod_{i=1}^{n/2}(q^{2i}-1)\Biggr),\]
  where $d=\gcd(2,q-1)$. The smallest
  index maximal subgroups of $S$ are described in
  \cite[Theorem~2]{Mazurov93}. If $(n,q)=(4,3)$
  then $S\cong \PSU_4(2)$ and $G$ has an ordinary maximal subgroup of
  type $2^4:{\Alt(5)}$, of index ${\ele(S)}=27$ and order~$960$.  In this case,
  $\lvert \Out S\rvert=2<\log {\ele(S)}$.

  Suppose that $n\ge 6$
  and $q=2$. Then $S\cong \PSp_n(2)\cong {\Orth_{n+1}(2)}$ and the smallest index
  maximal subgroup of $S$ is isomorphic to $\Orth_n^{-}(2)$ and has index
  $2^{n/2-1}(2^{n/2}-1)$. By \cite[Tables~8.28, 8.48, 8.64,
  8.80]{BrayHoltRoneyDougal13} and
  \cite[Table~3.5.C]{KleidmanLiebeck90}, this subgroup is
  ordinary.  It is clear that this index is smaller than the order of
  this subgroup, namely
  $2^{(n/2)(n/2-1)}(2^{n/2}+1)\prod_{i=1}^{n/2-1}(q^{2i}-1)$, and that
  $\lvert \Out S\rvert=1<\log {\ele(S)}$.

  If $n=4$ and $q=2^f$, then $\lvert S\rvert=q^4(q^4-1)(q^2-1)$ and there are two conjugacy classes of
  parabolic maximal subgroups of type $E_q^3:{\GL_2(q)}$ and index
  ${\ele(S)}=(q^4-1)/(q-1)$ fused under the
  graph automorphism, by
  \cite[Table~8.14]{BrayHoltRoneyDougal13}. There is a novelty
  subgroup $[q^4]:(C_{q-1})^2$, maximal under subgroups not contained
  in the subgroup $\langle \phi\rangle$ generated by the field automorphism, of index
  \[\frac{(q^4-1)(q+1)}{q-1}<
  \left(\frac{q^4-1}{q-1}\right)^2={\ele(S)}^2.\]
  In this case, 
  \[\lvert\Out S\rvert = 1\cdot f\cdot 2=2f\le \log q^2\le \log
  {\ele(S)}.\]

  For the rest of the values of $(n,q)$, the smallest index
  maximal subgroup is a parabolic subgroup, which can be taken to have
  the form $[q^{n-1}]:((q-1).\PSp_{n-2}(q))$, with index
  ${\ele(S)}=(q^n-1)/(q-1)$
  and order 
  \[\frac{1}{d}q^{n-1}(q-1)q^{((n-2)/2)^2}\prod_{i=1}^{(n-2)/2}(q^{2i}-1)>q^n\ge
  \frac{q^n-1}{q-1}={\ele(S)}. \]
  Here  $\lvert\Out S\rvert=\gcd(2,q-1)\cdot f\cdot 1=2f$ if $p$ is odd
  and $\lvert \Out S\rvert=\gcd(2,q-1)\cdot f\cdot 1=f$ if $p=2$. In any
  case,
  \[\lvert \Out S\rvert \le 2f\le \log q^{n-1}\le \log {\ele(S)}.\]

  \smalltitle{Unitary groups}

  Suppose now that $S\cong \PSU_n(q)$ with $n\ge 3$ and $q>2$ if $n=3$; then
  $\lvert \Out S\rvert \le n\cdot f\cdot 1$ with $p^f=q^2$. The smallest
  index of a maximal subgroup of $S$ is given in
  \cite[Theorem~3]{Mazurov93}.
  
  We consider first the case $S\cong
  \PSU_3(5)$. We see in
  \cite{Atlas85} that the automorphism group of $S$ is isomorphic to
  $S_3$ and that there are three conjugacy classes of maximal
  subgroups of the smallest possible index, of type $\Alt(7)$ and index ${\ele(S)}=50$
  and order $2\,520$. Moreover, there is an ordinary maximal subgroup
  of type $5_+^{1+2}:8$ and index $126\le 50^2$.  In this case, $\lvert
  \Out S\rvert =3\cdot 2\cdot 1=6\le 3\log {\ele(S)}$.

  Suppose that $n=3$,
  $q\notin\{2,5\}$. Then $S$ has a conjugacy class of parabolic ordinary maximal
  subgroups of type $[q^3]:((q^2-1)/d)$, with $d=\gcd(3,q+1)$ and
  index ${\ele(S)}=q^3+1$, as we can see in
  \cite[Table~8.5]{BrayHoltRoneyDougal13}. Clearly, ${\ele(S)}^2<\lvert
  S\rvert$ and $\lvert \Out S\rvert \le \gcd(3,q-1)\cdot 2f\le \log
  2^{6f}$. If $p\ge 5$,
  $\log 2^{6f}\le \log q^3\le \log {\ele(S)}$. If $p=3$, then $\log
  2^{6f}\le \log 3^{4f}=(4/3)\log 3^{3f}\le (4/3)\log {\ele(S)}$.

  Now assume that $n=4$. Then
  $S$ has a conjugacy class
  of parabolic maximal subgroups of type
  $[q^4].{\SL_2(q^2)}:((q-1)/d)$, with $d=\gcd(q+1,4)$, whose index is
  ${\ele(S)}=(q^3+1)(q+1)$ and whose order is $q^6(q^4-1)(q-1)>{\ele(S)}$. These
  subgroups are ordinary, as shown in
  \cite[Table~8.10]{BrayHoltRoneyDougal13}. Moreover, $\lvert
  \Out S\rvert =\gcd(4,q+1)\cdot 2f\cdot 1 =8f\le  2\log 2^{4f}\le 2\log
  q^4\le 2\log {\ele(S)}$. If, in addition, $p\notin\{2,3\}$, then
  $\lvert\Out S\rvert
  =8f\le \log q^4\le \log {\ele(S)}$.

  Assume now that
  $n>4$, and that $q>2$ if $n$ is even. In this case, the smallest
  index maximal subgroups are the parabolic subgroups of type
  $[q^{2n-3}]:{\SU_{n-2}(q)}:((q^2-1)/d)$, with $d=\gcd(n,q+1)$. These
  subgroups have index
  ${\ele(S)}=(q^n-(-1)^n)(q^{n-1}-(-1)^{n-1})/(q^2-1)$. By
  \cite[Tables~8.20, 8.26, 8.37, 8.46, 8.56, 8.62, 8.72,
  and~8.78]{BrayHoltRoneyDougal13} and
  \cite[Table~3.5.B]{KleidmanLiebeck90}, we see that these maximal subgroups
  are ordinary. Since they have order
  \[\frac{1}{d}q^{n(n-1)/2}(q^2-1)\prod_{i=1}^{n-3}(q^{i+1}-(-1)^{i+1}),\]
  $(q^n+1)<q^2(q^{n-2}+1)$ and $q^{n-1}-1<q^{n-1}$ if $n$ is odd, and
  $(q^{n-1}+1)<q^2(q^{n-3}+1)$ and $q^n-1<q^n$ if $n$ is even, we see
  that the index of these subgroups is smaller than their
  order. Observe that
  \begin{align*}
    \ele(S)&=
    \begin{cases}
      (q^{n-1}-q^{n-2}+\dots -q+1)(q^{n-2}+q^{n-3}+\dots+q+1)&\text{($n$ odd)}\\
      (q^{n-1}+q^{n-2}+\dots +q+1)(q^{n-2}-q^{n-3}+\dots-q+1)&\text{($n$ even)}
    \end{cases}\\
    &=
      \begin{cases}
        ((q-1)(q^{n-2}+q^{n-4}+\dots+q)+1)(q^{n-2}+q^{n-3}+\dots+1)&\text{($n$ odd)}\\
        (q^{n-1}+q^{n-2}+\dots +1)((q-1)(q^{n-3}+q^{n-5}+\dots+q)+1)&\text{($n$ even)}        
      \end{cases}\\
    &\ge
      \begin{cases}
        q^{n-2}q^{n-2}&\text{($n$ odd)}\\
        q^{n-1}q^{n-3}&\text{($n$ even)}
      \end{cases}\\
    &=q^{2n-4}.
  \end{align*}
  It follows that $\log\ele(S)\ge (2n-4)\log q=(2n-4)f\log p$.
  Suppose that $p\ge 5$ or that $p=3$ and $n\ge 6$. Then $n\le (n-2)\log p$ and so
  \[\lvert \Out S\rvert \le n\cdot 2f \cdot 1\le (2n-4)f\log p\le \log \ele(S).\]
  Suppose now that $p=2$ and $n\ge 5$, or that $p=3$ and $n=5$, and $\gcd(n, q+1)=1$. In this case, $\lvert \Out S\rvert=2f$ and so
  \[\lvert \Out S\rvert \le 2f \cdot (2n-4)f\log 2\le \log \ele(S).\]
  Suppose now that $p=2$ and $n\ge 5$ or that $p=3$ and $n=5$, and that $\gcd(n, q+1)>1$. In this case, $n\le 3(n-2)\log p$ and so
  \[\lvert \Out S\rvert \le 2nf \le 3(2n-4)f\log p \le 3\log \ele(S).\]

  Finally, assume that $n\ge 6$, $n$ is even, and $q=2$. Then
  the smallest index maximal subgroups of $S$ have type
  $\SU_{n-1}(2):(3/d)$, with $d=\gcd(3,m)$, and index
  ${\ele(S)}=2^{n-1}(2^n-1)/3$,
  and, since $2^{n-1}\le 2^{(n-1)(n-2)/2}$ and
  $2^n-1<2^n+2=2(2^{n-1}+1)$, we obtain that ${\ele(S)}^2<\lvert
  S\rvert$. By \cite[Tables~8.26, 8.46, 8.62,
  and~8.78]{BrayHoltRoneyDougal13} and
  \cite[Table~3.5.B]{KleidmanLiebeck90}, we conclude that these
  maximal subgroups are ordinary. In this case, $\lvert
  \Out S\rvert=2\le \log {\ele(S)}$ if $n$ is not divisible by~$6$, while $\lvert \Out S\rvert = 3\cdot 2=6\le \log{\ele(S)}$ if $n$ is divisible by~$6$.

  \smalltitle{Orthogonal groups}

  Suppose now that $S\cong {\Orth_n^\varepsilon(q)}$ is an orthogonal group
  with $n\ge 7$, $n$ even if $q=2^f$. The smallest index maximal
  subgroups of $S$ have been described in
  \cite[Theorem]{VasilevMazurov94}. 

  Assume first that $n=8$, $\varepsilon={+}$ and $q>3$. Then we have
  that ${\ele(S)}=(q^4-1)(q^3+1)/(q-1)$ and we can take a maximal subgroup
  $H$ of type $q^6.({\Omega_6^+(q)}\times (q-1)/d).e$, where
  $d=\gcd(q^4-1,4)$, $e=\gcd(q^4-1, 2)$. Hence $\lvert
  H\rvert=q^{12}(q^3-1)(q^4-1)(q^2-1)(q-1)/e$. Since
  $(q^3-1)(q-1)>2$, we conclude that $q(q^3-1)=q^4-q>q^3+1$, and so
  ${\ele(S)}<\lvert H\rvert$.
  By \cite[Table~8.50]{BrayHoltRoneyDougal13},
  $S$ possesses an ordinary maximal subgroup in the Aschbacher class $\mathcal{C}_1$ of
  type $E_q^{1+8}(\frac{1}{e}{\GL_2(q)}\times
  \Omega_4^+(q)\bigr).e$ with index
  $v=(q+1)(q^2-q+1){(q^2+1)}^2(q^2+q+1)$.  Therefore 
  \[\frac{v}{{\ele(S)}^2}=\frac{q^3-1}{(q+1)^2(q^3+1)(q-1)}<1.\]
  It follows that $v<{\ele(S)}^2$. In this case, we have three conjugacy
  classes of maximal subgroups of index ${\ele(S)}$, fused under the
  triality outer automorphism.  Moreover, $\lvert \Out S\rvert =
  \gcd(2,q-1)^2\cdot f\cdot 3!$. If $p\ne 2$, then $2^8<3^6$ and so $\lvert \Out
  S\rvert \le 24f\le 3\log 2^{8f}\le 3\log p^{6f}\le
  3\log {\ele(S)}$. If $p\ge 17$, then $2^4\le p$ and so $\lvert \Out
  S\rvert \le 24f \le \log 2^{24f}\le \log p^{6f}\le \log {\ele(S)}$. If
  $p=2$, then $\lvert \Out S\rvert=6f\le \log q^6\le \log {\ele(S)}$.

  Suppose now that $S\cong {\Orth_n^+(2)}$ with $n=2t$ even. Then there is
  at least one 
  conjugacy class of maximal subgroups of smallest index of type $H\cong
  \Omega_{n-1}(2)$ and index ${\ele(S)}=2^{t-1}(2^t-1)$, and order
  \[\lvert H\rvert=2^{(n/2-1)^2}(2^{n/2}-1)(2^{n/2-1}-1)\dotsm
  (2^2-1);\]
  clearly ${\ele(S)}<\lvert H\rvert$. In this case, $\lvert \Out
  S\rvert \le 6\le \log {\ele(S)}$.

  By
  \cite{Atlas85}, if $n=8$, then there are three conjugacy classes of
  maximal subgroups of smallest index fused under the triality
  automorphism; moreover, ${\ele(S)}=120$ and $S$ has an ordinary maximal subgroup of type
  $2_+^{1+8}:(S_3\times S_3\times S_3)$ and index
  $1575<120^2={\ele(S)}^2$. Assume
  that $n\ge 10$. By \cite[Tables~8.66
  and~8.82]{BrayHoltRoneyDougal13} and
  \cite[Table~3.5.E]{KleidmanLiebeck90}, this subgroup is
  ordinary. Furthermore, $\lvert \Out S\rvert =6<\log {\ele(S)}$.

  Assume now that $S\cong {\Orth_n(3)}$, where $n=2t+1$ is odd. Then
there exists a conjugacy class of maximal
  subgroups isomorphic to $H\cong \Omega_{n-1}^-(3).2$, with index
  ${\ele(S)}=3^t(3^t-1)/2$ and
  \[\lvert
    H\rvert=3^{t(t-1)}(3^t+1)(3^{2t-2}-1)(3^{2t-4}-1)\dotsm
    (3^4-1)(3^2-1),\]
  so that ${\ele(S)}<\lvert H\rvert$. 
  Furthermore, by
  \cite[Tables~8.39, 8.58, and~8.74]{BrayHoltRoneyDougal13} and
  \cite[Table~3.5.D]{KleidmanLiebeck90}, these maximal subgroups are
  ordinary. In this case, $\lvert \Out S\rvert =2\cdot 1\cdot 2=4<\log {\ele(S)}$.

  Assume that $S\cong {\Orth_n^+(3)}$, where $n=2t$ is even. The maximal
  subgroups of the smallest index have type $H\cong
  \Omega_{n-1}(3).h$, where $h=\gcd(t-1,2)$. Their index is
  ${\ele(S)}=3^{t-1}(3^t-1)/2$, clearly smaller than
  their order $\lvert
  H\rvert=(1/2)3^{(t-1)^2}\prod_{i=1}^{t-1}(3^{2i}-1)$. In this case,
  $\lvert \Out S\rvert =2^2\cdot 1\cdot 2=8< \log {\ele(S)}$ if $n\ge 10$.
 
  In the case that $S\cong {\Orth_8^+(3)}$, ${\ele(S)}=1\,080$, there are six
  conjugacy classes of maximal subgroups isomorphic to $H$, the outer automorphism
  group of $S$ is isomorphic to $S_4$ and there is an ordinary subgroup of
  type $3_+^{1+8}:2({\Alt(4)}\times \Alt(4)\times \Alt(4)).2$, according to
  \cite{Atlas85}, of index $36\,400<{\ele(S)}^2$.  Moreover,  $\lvert \Out S\rvert = 24 <3\log 1\,080=3\log {\ele(S)}$.

  Assume that $n=2t\ge
  10$. By \cite[Tables~8.66 and 8.82]{BrayHoltRoneyDougal13} and
  \cite[Table~3.5.E]{KleidmanLiebeck90}, there are two conjugacy
  classes of subgroups of the smallest index $\ele(S)=3^t(3^t-1)/2$. On the other hand, there is an ordinary maximal parabolic
  subgroup of type $3^{n-2}.(\Omega^+_{n-2}(3).2)$, of index
  $(3^t-1)(3^{t-1}+1)/2<{\ele(S)}^2$. In this case, $\lvert \Out S\rvert = \gcd(2, 3-1)^2\cdot 1 \cdot 2=8$ and, since $\ele(S)\ge 3^5(3^5-1)/2>2^8$, we conclude that the inequality $\lvert \Out S\rvert < \log\ele(S)$ also holds in this case.

  Suppose that $S\cong {\Orth_n(q)}$ where $n=2t+1$ is odd, $q=p^f\ne 3$,
  and $p$ is an odd prime. Then the smallest index of a maximal
  subgroup of $S$ corresponds to $H\cong
  [q^{n-2}].((\Omega_{n-2}(q)\times (q-1)/2).2)$, of index
  $(q^{n-1}-1)/(q-1)$. Since $\lvert H\rvert =
  (1/2)q^{n-2}q^{(t-1)^2}\prod_{i=1}^{t-1}(q^{2i}-1)$, it is clear
  that $\lvert H\rvert > {\ele(S)}$.
  By \cite[Tables~8.39, 8.58,
  and~8.74]{BrayHoltRoneyDougal13} and
  \cite[Table~3.5.D]{KleidmanLiebeck90}, these maximal subgroups are
  ordinary. Moreover, $\lvert \Out S\rvert = 2^2\cdot f\cdot 1\le
  2\log 2^f\le 2\log q <\log
  {\ele(S)}$.

  Suppose now that $n=2t$, $q=2^f$, $f\ge 2$, and that $(n,\varepsilon)\ne
  (8,{+})$. In this case, the smallest index of a maximal subgroup
  corresponds to subgroups of type
  $H=[q^{n-2}].(\Omega_{n-2}^\varepsilon(q)\times (q-1))$, of index
  ${\ele(S)}=(q^t-\varepsilon)(q^{t+1}+\varepsilon)/(q-1)$  and order
  $\lvert H\rvert =q^{n-2}q^{(t-1)(t-2)}(q^{t-1}-\varepsilon)\prod_{i=1}^{t-1}(q^{2i}-1)$. We
  can see that $\lvert H\rvert > {\ele(S)}$.
  By \cite[Tables~8.52, 8.66,
  8.68, 8.82, and 8.84]{BrayHoltRoneyDougal13} and \cite[Tables~3.5.E
  and~3.5.F]{KleidmanLiebeck90}, we see that these subgroups are
  ordinary. Furthermore, $\lvert \Out S\rvert=f\cdot 2$ if
  $\varepsilon={+}$ and $\lvert \Out S\rvert= 2f\cdot 1$ if
  $\varepsilon={-}$. In both cases, $\lvert \Out S\rvert \le \log
  {\ele(S)}$.

  Finally, suppose that $n=2t$, $q=p^f$, $p$ is an odd prime, the pair
  $(m,\varepsilon)$ is different from $(8,{+})$, and $(q,\varepsilon)$
  is different from $(3, {+})$. Then the smallest index of a maximal
  subgroup corresponds to the subgroups of type $H\cong
  p^{f(m-2)}.((\Omega_{m-2}^\varepsilon(q)\times (q-1)/h).j$, where
  $(h,j)=(2,2)$ if $\gcd(q^t-\varepsilon, 4)=2$, $(h,j)=(2,1)$ if
  $\gcd(q^t-\varepsilon, 4)=4$ and $\varepsilon(q^{t-1}-\varepsilon,
  4)=2$, and $(h,j)=(4,2)$ if $\gcd(q^t-\varepsilon,4)=4$ and
  $\gcd(q^{t-1}-\varepsilon, 4)=1$. This index is
  ${\ele(S)}=(q^t-\varepsilon)(q^{t-1}+\varepsilon)/(q-1)$. As in the
  previous cases, ${\ele(S)}< \lvert H\rvert$. 
  This subgroup is
  ordinary by \cite[Tables~8.52, 8.66, 8.68, 8.82,
  and 8.84]{BrayHoltRoneyDougal13} and \cite[Tables~3.5.E
  and~3.5.F]{KleidmanLiebeck90}. Furthermore, $\lvert \Out S\rvert \le
  2^2\cdot f\cdot 2\le \log q^6\le \log {\ele(S)}$.

\smalltitle{Groups of type~$G_2(q)$}

The maximal subgroups of smallest index of the simple groups $G_2(q)$,
$q>2$, have been studied in \cite[Theorem~1]{Vasilyev96}.

Assume that $S\cong G_2(3)$. Then $P\cong \PSU_3(3):2$ is a maximal
subgroup of the smallest possible index ${\ele(S)}=351$  and order
$12\,096>{\ele(S)}$. By \cite{Atlas85}, there are two conjugacy classes of
maximal subgroups of this index and there is a conjugacy class of
ordinary maximal subgroups of type $\PSL_2(8):3$ and index
$2\,808<{\ele(S)}^2$. In this case, $\lvert \Out S\rvert =2<\log {\ele(S)}$.

Assume that $S\cong G_2(4)$. Then $P\cong J_2$ is a maximal subgroup
of the smallest possible index ${\ele(S)}=416$  and order
$604\,800>{\ele(S)}$. According to \cite{Atlas85}, this subgroup is
ordinary. Moreover, $\lvert \Out S\rvert = 2<\log {\ele(S)}$.

Suppose now that $S\cong G_2(q)$ with $q\ge 5$. Then
${\ele(S)}=(q^6-1)/(q-1)$.

Assume that $S\cong G_2(q)$ with $q=2^f$, $f\ge 3$. Then $P_1\cong (2^f
. 2^{4f}):({\PSL_2(q)}\times (q-1))$ and $P_2\cong
(2^{2f}. 2^{3f}):({\PSL_2(q)}\times (q-1))$  are  maximal subgroups of $S$ of
the smallest possible index. We see in
\cite[Table~8.30]{BrayHoltRoneyDougal13} that these subgroups are
ordinary. Clearly, ${\ele(S)}<\lvert P\rvert$  and $\lvert \Out
S\rvert=f\le \log {\ele(S)}$.

Assume that $S\cong G_2(3^f)$ with $f\ge 2$. Then the smallest index
maximal subgroups of $S$ are of type $P\cong (3^f . 3^{2f}\times
3^{2f}):(2.({\PSL_2(q)}\times(q-1)/2).2)$. Note that $\lvert P\rvert >
{\ele(S)}$.
By
\cite[Table~8.42]{BrayHoltRoneyDougal13}, there are two conjugacy
classes of subgroups of this type. There is a conjugacy class of
ordinary subgroups of type
$({\SL_2(q)}\circ {\SL_2(q)}).2$ and index
$q^4(q^4+q^2+1)=q^4(q^6-1)/(q^2-1)<{\ele(S)}^2$. Moreover, $\lvert \Out
T\rvert=2f\le \log {\ele(S)}$.

Now assume that $q=p^s$, with $p$ a prime, $p>3$. Then there are two
conjugacy classes of maximal subgroups of the smallest index, namely
$P_1\cong (p^{2f}.(p^f . p^{2f})):(2.({\PSL_2(q)}\times (q-1)/2).2)$
and $P_2\cong (p^f . p^{4f}):(2.({\PSL_2(q)}\times
(q-1)/2).2)$. Again, $\lvert P_1\rvert > {\ele(S)}$, $\lvert \Out S\rvert =
f\le \log {\ele(S)}$ 
and,
by
\cite[Table~8.41]{BrayHoltRoneyDougal13}, these subgroups are
ordinary.

\smalltitle{Groups of type $F_4(q)$}

The smallest index maximal subgroups of $F_4(q)$ have been studied in
\cite[Theorem~2]{Vasilyev96}. This index is
\[{\ele(S)}=\frac{(q^{12}-1)(q^4+1)}{q-1}\]
and is attained by a parabolic subgroup. 
Since the order of this subgroup is
$\lvert P\rvert =q^{24}(q^4-1)(q^6-1)(q^2-1)(q-1)$, we have that
$\lvert P\rvert > {\ele(S)}$. Moreover, $\lvert \Out S\rvert$ is $f$ if
$p\ne 2$ and $2f$ if $p=2$. In both cases, $\lvert \Out S\rvert \le
\log q^4\le \log {\ele(S)}$.

Assume first that $q=2^f$. Then there are two conjugacy classes of
parabolic maximal subgroup isomorphic to
$P\cong (2^f. 2^{8f}\times 2^{6f}):({\PSp_6(q)}\times (q-1))$. 
By \cite[Table~5.1]{LiebeckSaxlSeitz92-plms},
$S$ has an ordinary maximal subgroup of
type $H\cong e.\bigl(L_3^\varepsilon(q)\times
L_3^\varepsilon(q)\bigr).e.2$, where $\varepsilon=\pm 1$,
$e=\gcd(3,q-\varepsilon)$, $L_3^{+1}(q)=\PSL_3(q)$ and
$L_3^{-1}(q)=\PSU_3(q)$. Then $v=\lvert S:H\rvert
=q^{18}(q+1)^2(q^2-q+1)^2(q^2+1)^2(q^4-q^2+1)(q^4+1)$  and we can check
that $v\le {\ele(S)}^2$. 

Assume that $q=p^f$ with $p$ a prime different from $2$. Then
$P_1=(p^f.p^{14f}):(2.({\PSp_4(q)}\times (q-1)/2).2)$ or
$P_4=(p^{7f}.p^{8f}):(2.({\Orth_7(q)}\times (q-1)/2).2)$ are parabolic maximal
subgroups of $S$ of the smallest index. The conjugacy classes of both
subgroups are fixed under the outer automorphism group of $S$ since
both are parabolic.

\smalltitle{Groups of type $E_6(q)$}
  The maximal subgroups of smallest index of $S\cong E_6(q)$ have been
  studied in \cite[Theorem~1]{Vasilyev97}. The smallest index of a
  maximal subgroup of $S$ is
  ${\ele(S)}=(q^9-1)(q^8+q^4+1)/(q-1)=(q^9-1)(q^{12}-1)/\bigl((q^4-1)(q-1)\bigr)$,
  corresponding to two conjugacy
  classes of parabolic
  subgroups $P_1=p^{16f}:(e.{\Orth_{10}^+(q)}\times (q-1)/e').e)$, where
  $e=\gcd(q-1,4)$, $e'=e\cdot\gcd(q-1,3)$, interchanged by the graph
  automorphism.  Clearly, ${\ele(S)}<\lvert P_1\rvert$. Moreover, $\lvert
  \Out S\rvert =\gcd(3,q-1)\cdot f\cdot 2\le 6f\le \log q^8\le \log
  {\ele(S)}$.
  The parabolic
  subgroup $P_2$ is ordinary and is of 
  type $[q^{21}]:H$ where $H$ has a section isomorphic to $\PSL_6(q)$ (see also \cite[Table~7.3]{Craven21-arXiv-max-sgps-except})
  and so its index divides
  $v=(q^{12}-1)(q^4+1)(q^9-1)/(q^3-1)$. Then
  \[\frac{v}{{\ele(S)}^2}=\frac{(q^8-1)(q^4-1)(q-1)^2}{(q^{12}-1)(q^9-1)(q^3-1)}<1,\]
  therefore $v< {\ele(S)}^2$. We conclude that $\lvert S:P_2\rvert \le v  \le {\ele(S)}^2$.

\smalltitle{Groups of type $E_7(q)$}

  The maximal subgroups of smallest index of $S\cong E_7(q)$ for
  $q=p^f$ have been
  studied in \cite[Theorem~2]{Vasilyev97}. They are the parabolic
  subgroups $P\cong p^{27f}:(d'.(E_6(q)\times (q-1)/c).d')$, with
  $d'=\gcd(q-1,3)$, $c=\gcd(q-1,2)\cdot d'$, of index
  ${\ele(S)}=(q^{14}-1)(q^9+1)(q^5+1)/(q-1)$. Clearly $P_1$ is ordinary,
  ${\ele(S)}<\lvert P_1\rvert$, and $\lvert \Out S\rvert=\gcd(2,q-1)\cdot
  f\cdot 1\le 2f\le \log q^5\le \log {\ele(S)}$.

\smalltitle{Groups of type $E_8(q)$}

  The maximal subgroups of smallest index of $S\cong E_8(q)$ for
  $q=p^f$ have been
  studied in \cite[Theorem~3]{Vasilyev97}. They are the parabolic
  subgroups $P\cong (p^f.p^{56f}):(d.(E_7(q)\times (q-1)/d).d)$, with
  $d=\gcd(q-1,2)$, of index
  ${\ele(S)}=(q^{20}-1)(q^{12}+1)(q^{10}+1)(q^6+1)/(q-1)$. Clearly $P$ is
  ordinary and ${\ele(S)}<\lvert P_1\rvert$, and $\lvert \Out T\rvert=f\le
  \log q^6\le \log {\ele(S)}$.

\smalltitle{Twisted groups}
In \cite[Theorem~1]{Vasilyev98}, it is shown that if $S\cong
{}^2\!B_2(q)$, with $q=2^f$, $f$ an odd integer greater than $1$, the
smallest index of a maximal subgroup of $S$ corresponds to the
parabolic subgroup $P\cong (2^f.2^f):(q-1)$, with index
${\ele(S)}=q^2+1$. By \cite[Table~8.16]{BrayHoltRoneyDougal13}, these
subgroups are ordinary and, clearly, ${\ele(S)}< \lvert P\rvert$  and
$\lvert \Out S\rvert = f= \log q\le \log {\ele(S)}$. 
 
In \cite[Theorem~2]{Vasilyev98}, it is shown that if $S\cong
{}^2\!G_2(q)$, with $q=3^f$ and $f$ an odd integer greater than~$1$,
there is a class of smallest index maximal subgroups isomorphic to
$P\cong (3^f.3^f.3^f):(q-1)$ and index $q^3+1$. By
\cite[Table~8.43]{BrayHoltRoneyDougal13}, these subgroups are ordinary
and, clearly, ${\ele(S)}< \lvert P\rvert$ and $\lvert \Out S\rvert = f\le
\log q\le \log {\ele(S)}$.

In \cite[Theorem~3]{Vasilyev98}, it is shown that if $S\cong
{}^3\!D_4(q)$, with $q=p^f$, the smallest index maximal subgroups of $S$ are
isomorphic to $P\cong (p^f.p^{8f}):(d.({\PSL_2(q^3)}\times
(q-1)/d).d)$, where $d=\gcd(2,q-1)$, with index
${\ele(S)}=(q^8+q^4+1)(q+1)$. By \cite[Table~8.51]{BrayHoltRoneyDougal13},
these subgroups are ordinary. Moreover, $\lvert P\rvert
=dq^{12}(q^6-1)(q-1)=dq^{12}(q-1)^2(q+1)(q^4+q^2+1)>{\ele(S)}$ and $\lvert
 \Out S\rvert =f\le \log q\le \log {\ele(S)}$.

In \cite[Theorem~4]{Vasilyev98}, it is shown that for $S\cong
{}^2\!E_6(q)$, with $q=p^f$, the smallest index maximal subgroups of $S$ are
isomorphic to $P\cong (p^f.p^{20f}):(d_+.{\PSU_6(q)}\times
(q-1)/d'_+).d'_+$, where $d_+=\gcd(2,q+1)$, $d_+'=\gcd(3,q+1)$. Their
index is ${\ele(S)}=(q^{12}-1)(q^6-q^3+1)(q^4+1)/(q-1)$. These subgroups
are clearly ordinary because they are parabolic. Clearly, $\lvert
P\rvert > {\ele(S)}$  and $\lvert \Out S\rvert =\gcd(3,q+1)\cdot f\cdot 1\le
3f\le \log q^{11}\le \log {\ele(S)}$. 

By \cite[Theorem~5]{Vasilyev98}, if  $S\cong {}^2\!F_4(q)$, with
$q=2^f$, $f>1$ odd, the smallest index maximal subgroups of $S$ are
isomorphic to $P\cong (2^f.2^{4f}.2^{5f}):({}^2\!B_2(q)\times
(q-1))$, with index ${\ele(S)}=(q^6+1)(q^3+1)(q+1)
$ and order
$\lvert P\rvert = q^{12}(q^2+1)(q-1)^2$.  It is clear that $\lvert
P\rvert < {\ele(S)}$ and $\lvert \Out S\rvert = f\le \log q\le \log {\ele(S)}$. 
Moreover, the subgroup $P$ is ordinary because it is
a parabolic subgroup. 
\end{proof}
\begin{remark}
  We thank one of the anonymous referees for drawing our attention to the interesting paper \cite{AlaviBurness15} of Alavi and Burness. These authors have obtained in their Theorems~2--5 for each simple group $G$ and in their Theorem~7 for each almost simple group~$G$ the list of all maximal subgroups $H$ of $G$ with $\lvert H\rvert^3 \ge \lvert G\rvert$. They call them \emph{large}. In fact, all maximal subgroups appearing in the proof of Theorem~\ref{th-simple} are large in this sense and so all of them are mentioned in~\cite{AlaviBurness15}.
\end{remark}
\begin{remark}
  Note that the smallest index of a smallest core-free maximal subgroup of an almost
  simple group with socle $\PSL_n(q)$ with $n\ge 3$ can be different
  from the indices of the parabolic and the double parabolic
  subgroups. According to
  \cite{Atlas85}, if $S=\PSL_3(4)$, then the extension $S.2_1$
  contains a maximal subgroup of type $\mathrm{M}_{10}$ and least index $56$, different from the indices of the
  parabolic subgroups of type $P_1$, of index $21$, that do not exist
  in this extension, and the double parabolic
  subgroups of type $P_{1,2}$, of index $105$, that also appear as a
  maximal subgroup of $S.2_1$.
\end{remark}

\begin{remark}\label{remark-PSLn2}
  Let $S=\PSL_n(2)\cong \GL_n(2)$, where $n$ is a prime, $n\ge 5$. There is a
  unique class of ordinary maximal subgroups of $S$ of geometric type by Tables~8.18, 8.19, 8.36,
  8.37, 8.70, 8.71 of \cite{BrayHoltRoneyDougal13} and
  \cite[Table~3.5.A]{KleidmanLiebeck90}, namely the subgroup
  $M=\GL_1(2^n):n$ in the Aschbacher class $\mathcal{C}_3$. Note that
  $\lvert S\rvert=(2^n-1)(2^n-2)\dotsm (2^n-2^{n-1})$, while $\lvert
  M\rvert=(2^n-1)n$. Consequently, $\lvert S:M\rvert=(2^n-2)\dotsm
  (2^n-2^{n-1})/n$. The smallest index of a core-free maximal subgroup
  of $S$ is smaller or equal than the index of the parabolic subgroup
  $P_1$, corresponding to the stabiliser of a vector subspace of
  dimension $1$. Since $\lvert
  P_1\rvert=2^{n-1}(2^{n-1}-1)(2^{n-1}-2)\dotsm (2^{n-1}-2^{n-2})$, we
  have that $\lvert S:P_1\rvert=2^n-1$. Now the largest power of~$2$ dividing
  $\lvert S:M\rvert$ is $2\cdot 2^2\dotsm
  2^{n-1}=2^{n(n-1)/2}$. Therefore $\lvert
  S:M\rvert={(2^n)}^{(n-1)/2}>{(2^n-1)}^{(n-1)/2}\ge
  {\ele(S)}^{(n-1)/2}$. In particular, there cannot exist a constant $c$
  such that if $\ele_1(S)$ is the common index of a maximal subgroup of
  geometric type in
  all almost simple groups associated with the non-abelian simple
  group $S$, $\ele_1(S)\le {\ele(S)}^c$ for all non-abelian simple
  groups~$S$.
\end{remark}
\begin{remark}
  The groups $\PSL_n(q)$ for $n\ge 3$, $q=q_0^2$, $q_0$ a prime power, contain always a maximal subgroup of the form $\PSL_n(q_0)$ or $\PSU_n(q_0)$, but their indices in $\PSL_n(q)$ are polynomials on $q_0$ of degree larger than  the degree of ${\ele({\PSL_n(q)})}^2=(q_0^{2n}-1)^2/(q_0^2-1)^2$ when $n\ge 4$. Hence this construction cannot be extended further. The groups $\PSL_{2m}(q)$ for $m\ge 2$, $q$ a prime power, $(2m,q)\ne (4,2)$, contain a parabolic subgroup $P_{m}$ that is ordinary. However, for $m\ge 4$, this subgroup has index in $\PSL_{2m}(q)$ that is a polynomial on $q$ of degree larger than the one of  ${\ele({\PSL_{2m}(q)})}^2=(q^{2m}-1)^2/(q-1)^2$. This justifies that these constructions cannot be extended further and so $\PSL_3(q_0^2)$, $\PSL_4(q)$, $\PSL_6(q)$ belong to the class $\mathfrak{Y}$, but not the linear groups in larger dimensions.
\end{remark}

\begin{remark}\label{remark-PSL-log}
  Let $S=\PSL_{m}(2^f)$ with $m\ge 3$ and $m\mid 2^f-1$ (for example,
  $m-1\mid f$). Then $\lvert {\Out S }\rvert =
  (2^f-1)\cdot f\cdot 2$ and ${\ele(S)}=(2^{mf}-1)/(2^f-1)$. Since
  \[\lim_f\frac{\log ((2^{mf}-1)/(2^f-1))}{m\cdot
    f\cdot 2}=\frac{m-1}{2m},\]
  it follows that the 
  bound $\lvert \Out S\rvert\le 3\log {\ele(S)}$ cannot be improved.
\end{remark}

\begin{proof}[Proof of Theorem~\ref{th-s-log3}]
  The result is clear for sporadic and alternating groups, since then
  the outer automorphism group is trivial, isomorphic to $C_2$, or
  isomorphic to $C_2\times C_2$. It only remains to consider the case
  when $S$ is a simple group of
  Lie type. According to \cite[Table~5]{Atlas85}, the outer
  automorphism group is isomorphic to an extension of a metacyclic
  group by a cyclic group, with the possible exception of
  $\Orth_{2m}^+(q)$ with $m\ge 4$, $m$ even. By \cite[page~181]{Kleidman87}, 
  \[\Out({\Orth_8^+(q)})\cong
  \begin{cases}
    \Sym(3)\times C_f&\text{if $q$ is even,}\\
    \Sym(4)\times C_f&\text{if $q$ is odd.}
  \end{cases}
  \]
  If $m\ge 6$ is even, the same arguments show that
  \[\Out({\Orth_{2m}^+(q)})\cong 
  \begin{cases}
    D_8\times C_f&\text{if $q$ is even,}\\
    C_2\times C_f&\text{if $q$ is odd.}
  \end{cases}
  \]
  By considering the normal subgroup isomorphic to $C_f$, we see that
  all subgroups of $\Out({\Orth_{2m}^+(q)})$ for $m\ge 4$, $m$ even, can also be
  generated by at most $3$ generators. In all other cases, all
  subgroups of $\Out S$ are extensions of a metacyclic group by a
  cyclic group and so they are also $3$-generated.

  If $\lvert \Out S\rvert \le \log {\ele(S)}$, then given a subgroup of
  $G$, we have at most $\log^3{\ele(S)}$ possibilities for a generating
  set. It follows that the number of subgroups of $\Out S$ is at most
  $\log^3 {\ele(S)}$. Therefore we must study the cases in which the
  inequality $\lvert \Out S\rvert \le \log {\ele(S)}$ can fail, that is,
  the cases mentioned in
  Theorem~\ref{th-simple}~(\ref{en-lemma-simple-1log}).

  We begin with the linear groups $S=\PSL_m(q)$. Suppose first that
  $m=2$ and that $q=3^f$. Then 
  \[\lvert \Out S\rvert = 2\cdot f\cdot 1=\frac{2}{\log 3}\log 3^f\le
  \frac{2}{\log 3}\log {\ele(S)}.\]
  Note that $\Out S$ and all its subgroups are
  $2$-generated. One of the generators can be taken in the subgroup of
  order~$2$, while the other one can be chosen in $2f$ different
  ways. This gives for the number of subgroups an upper bound of
  \[4f\le 2\cdot\frac{2}{\log 3}\log {\ele(S)}=\frac{4}{\log 3}\log {\ele(S)}.\]
  Since $4/{\log 3}\le \log^25 \le \log^2{\ele(S)}$,  we conclude that the number of subgroups of $\Out
  S$ is bounded by  $\log^3{\ele(S)}$.

  Let $S=\PSL_m(q)$ with $m\ge 3$. Then
  \begin{align*}
    \lvert \Out S\rvert &=\gcd(m,q-1)\cdot f\cdot 2\le
    2mf\\&=\frac{2m}{(m-1)\log p}\log p^{(m-1)f}\le \frac{2m}{(m-1)\log
      p}\log {\ele(S)}.
  \end{align*}

Note that
  \[\frac{2m}{(m-1)\log p}\le 3.\]
  
  Suppose that $f=1$.   Since $\ele({\PSL_3(2)})=7$ and $\lvert{\Out{\PSL_3(2)}}\rvert=2$, and $\ele({\PSL_4(2)})=8$ and $\lvert{\Out{\PSL_4(2)}}\rvert=2$, we can assume that $(m,q)\notin\{(3,2), (4,2)\}$. Consequently, $\ele(S)=(q^m-1)/(q-1)=q^{m-1}+q^{m-2}+\dots+q+1$ and so $\log\ele(S)>(m-1)\log q$.
We have that $\Out S$ is $2$-generated and has order
  $\gcd(m,q-1)\cdot 2$. Moreover, all subgroups of $\Out S$ are
  $2$-generated, the first generator can be taken in the normal cyclic
  subgroup of order $\gcd(m,q-1)$ and the second one in $\Out S$. This
  gives at most $\gcd(m,q-1)^2 \cdot 2$
  possibilities for a subgroup of $\Out S$. Since $(m-1)^3\log^3q-2m^2>0$ for $m\ge 3$ if $q\ge 3$ and $(m-1)^3\log^3 2-2m^2=(m-1)^3-2m^2>0$ for $m\ge 5$, we have that
  \[\gcd(m, q-1)^2\cdot 2\le 2m^2\le (m-1)^3\log^3q\le \log^3\ele(S)\]
  for $m\ge 3$, $(m,q)\notin \{(3,2), (4,2)\}$.
  
  Hence we can assume that $f\ge 2$. Every subgroup of $\Out S$ is
  $3$-generated, and the generators can be taken one in the group of
  diagonal automorphisms of order $\gcd(m,q-1)$, another one in the group
  generated by the diagonal and field automorphisms of order
  $\gcd(m,q-1)f$, and the third one in $\Out S$. It follows that the
  number of possible choices is at most $2f^2\gcd(m,q-1)^3$. Suppose
  first that $\gcd(m,q-1)<m$, then $\gcd(m,q-1)\le m/2$ and so the
  number of possible subgroups is bounded by 
  \[2f^2\frac{m^3}{8}\le \frac{1}{8}\frac{(2fm)^3}{8}\le
  \frac{1}{8}\frac{m^3}{(m-1)^3\log^3p}\log^3{\ele(S)}<\log^3{\ele(S)}.\]
  Therefore we can assume that $\gcd(m,q-1)=m$, that is, $m\mid
  q-1$. The number of possible subgroups of $\Out S$ is bounded by
  \[2f^2m^3\le \frac{(2fm)^3}{8}\le
  \frac{m^3}{(m-1)^3\log^3p}\log^3{\ele(S)}.\]
  If $p\ge 3$, then $m^3<(m-1)^3\log^3p$ and so the number of possible
  subgroups of $\Out S$ is again bounded by $\log^3{\ele(S)}$. Therefore we
  can suppose that $p=2$. In particular, $m$ must be odd. Assume that $m$ is a
  prime. Then the number of choices of the element of the group of
  diagonal automorphisms can be reduced from $m$ to $2$, namely the trivial
  element and a generator. This gives that the number of possible
  subgroups of $\Out S$ is bounded by 
  \[2\cdot mf\cdot 2mf=(2mf)^2\le \frac{4m^2}{(m-1)^2}\log^2{\ele(S)}.\]
  If $m=5$, then ${\ele(S)}\ge (2^4)^4=2^{16}$, and so $\log {\ele(S)}\ge
  16$. It follows that $4m^2/(m-1)^2<\log {\ele(S)}$. If $m=7$, then
  ${\ele(S)}\ge (2^3)^6=2^{18}$ and so $\log {\ele(S)}\ge 18$. Consequently,
  $4m^2/(m-1)^2<\log {\ele(S)}$. Hence for $m\in\{5,7\}$, the number of
  subgroups of $\Out S$ is bounded by $\log^3{\ele(S)}$. Assume now
  that $m\ge 9$ is odd. The number of choices of the element of the
  group of diagonal automorphisms can be reduced to the number of
  subgroups of this cyclic group, which coincides with the number of 
  divisors of $m$. Since $m$ is odd, this number is not greater than
  $2m/3$. The number of possible choices for the generators of a
  subgroup of $\Out S$ is bounded by
  \[(2m/3)\cdot mf\cdot 2mf=\frac{1}{6f}(2mf)^3\le
  \frac{1}{12}\frac{8m^3}{(m-1)^3}\log^3 {\ele(S)},\]
  and $8m^3/\bigl(12(m-1)^3\bigr)\le 243/256$, so that this number is
  bounded by $\log^3{\ele(S)}$. It only remains the case $m=3$. In this
  case, the group of outer automorphisms of $S$ has the presentation
  \[\Out S= \langle x, y, z\mid x^3=y^f=z^2=1, x^y=x^{-1},
  x^z=x^{-1}, y^z=y^{-1}\rangle.\]
  Note that $\langle y^2\rangle$ centralises $\langle x\rangle$. Now
  \[\Out S/\langle y^2\rangle\cong \langle a,b \mid a^3=b^2=1,
  a^b=a^{-1}\rangle\times \langle c\mid c^2=1\rangle\cong
  \Sym(3)\times C_2,\]
  where $a=\bar x$, $b=\bar z$, $c=\bar y \bar z$. Note that every
  subgroup of $\Sym(3)\times C_2$ is $2$-generated. A pair of
  generators can be obtained by taking an element of $\langle
  a,b\rangle$ and an element of the set $\{1,c,ac,bc,abc,a^2bc\}$. The
  preimages of these sets under the natural epimorphism from $\Out S$
  onto $(\Out
  S)/\langle y^2\rangle$ have $6(f/2)=3f$ elements each. Hence every
  element of $\Out S$ can be obtained by considering an element of
  $\langle y^2\rangle$, for which we have $f/2$ choices, and the
  $3f$ choices for each element of the preimages. This gives a
  bound for the number of subgroups of
  $(f/2)(3f)^2=9f^3/2=9(2f)^3/16<(9/16)\log^3{\ele(S)}<\log^3{\ele(S)}$. This
  completes the proof for the linear case.

  If $S\cong \PSU_3(5)$, then $\Out S\cong \Sym(3)$ has $6$
  subgroups, clearly $6\le \log^350=\log^3{\ele(S)}$ by
  \cite{Atlas85}. Suppose that $S\cong \PSU_3(q)$ with
  $q\notin\{2,5\}$. Then ${\ele(S)}=q^3+1$ by \cite[Theorem~3]{Mazurov93} and $\Out S$ is a metacyclic
  group of order $3\cdot 2f=6f$ and all its subgroups are also
  $2$-generated. Then the number of choices for a
  couple of generators for a subgroup of $G$ is bounded by $3f\cdot
  6f\le (2/3)\log {\ele(S)} \cdot (4/3)\log {\ele(S)}<\log^2{\ele(S)}< \log^3{\ele(S)}$. 
  Assume now that $S\cong \PSU_m(q)$ with $q>2$ if $m$ is even and
  $q^2=p^f$, with $p$ a prime. By \cite[Theorem~3]{Mazurov93},
  ${\ele(S)}=(q^m-(-1)^m)(q^{m-1}-(-1)^{m-1})/(q^2-1)$. Moreover,
  $\lvert\Out S\rvert=m\cdot (2f)\cdot 1$ is $2$-generated. Since
  $2mf\le 3\log {\ele(S)}^2$, we have that the number of choices for the
  pair of generators of a subgroup of $\Out S$ is bounded by $mf\cdot
  2mf\le (9/2)\log^2{\ele(S)}\le \log^3{\ele(S)}$, because $9/2\le \log
  28=\log^3{\ele \bigl({\PSU_3(3)}\bigr)}<\log {\ele(S)}$ by \cite{Atlas85}.

  Finally, suppose that $G\cong {\Orth_8^+(q)}$, where $q=p^f$ and
  $p\in\{3,5,7,11,13\}$. Now ${\ele(S)}=(q^3+q^2+q+1)(q^3+1)$ by
  \cite[Theorem]{VasilevMazurov94} and $\Out S\cong \Sym(4)\times
  C_f$. All subgroups of $\Out S$ are $3$-generated, and one generator
  can be taken in $C_f$, another one in $\Alt(4)\times C_f$ and the other
  one in the whole group. This gives at most $f\cdot (12f)\cdot (24f)=288f^3$
  possibilities for a subgroup of $\Out S$. Since $24f\le 3\log {\ele(S)}$,
  we have that $8f\le \log {\ele(S)}$. Consequently, $288f^3\le 512f^3\le
  \log^3{\ele(S)}$. This completes the proof of the theorem.
\end{proof}
\begin{remark}
  We note that in the Janko sporadic group $S\cong
  J_3$
  of order $50\,232\,960$ and with ${\ele(S)}=6\,156$, if $\xi=\log 50\,232\,960/\log 6\,156\approx 2.0323$, we have that $\lvert S\rvert=\ele(S)^\xi$. Therefore the exponent~$2$ in $\ele(S)^2\le \lvert S\rvert$ cannot be increased too much.
\end{remark}

\backmatter

\section*{Acknowledgement}
Open Access funding provided thanks to the CRUE-CSIC agreement with Springer Nature.
We thank the anonymous referees for their huge effort in reading carefully the manuscript and for identifying several typos and incorrect statements. Their contributions have improved considerably the presentation of the results of this paper and their proofs. We also thank the Editor-in-Chief, Professor Fernando Etayo, for his kind help in the revision of the manuscript and for his empathy and his patience with our concerns.

\section*{Declarations}

\subsection*{Funding}
These results are part of the R+D+i project supported by the Grant 
PGC2018-095140-B-I00, funded by MCIN/AEI/10.13039/501100011033 and by ``ERDF A way of making Europe'', as well as by the Grant PROMETEO/2017/057
funded by GVA/10.13039/501100003359, and partially supported by the Grant E22 20R, funded by Departamento de Ciencia, Universidades y Sociedad del
Conocimiento, Gobierno de Arag\'on/10.13039/501100010067.

\subsection*{Conflict of interest/Competing interests}
The authors have no relevant financial or non-financial interests to disclose.

\subsection*{Consent to participate}
Not applicable
\subsection*{Consent for publication}
Not applicable
\subsection*{Availability of data and materials}
Not applicable
\subsection*{Code availability}
Not applicable

\subsection*{Authors' contributions}
All authors have contributed equally to this paper.


\providecommand{\inpress}{\iflanguage{spanish}{en
 prensa}{\iflanguage{catalan}{en premsa}{in press}}}

\end{document}